\newcommand\CC{{\mathbb C}}
\newcommand\NN{{\mathbb N}}
\newcommand\KK{{\mathbb K}}
\newcommand\QQ{{\mathbb Q}}
\newcommand\RR{{\mathbb R}}
\newcommand\cGB{G} 
\newcommand\uGB{U}
\newcommand\cTB{T} 
\newcommand\SetOf[2]{\left\{\left.#1\vphantom{#2}\ \right|\ #2\vphantom{#1}\right\}}
\DeclareMathOperator{\val}{val}
\DeclareMathOperator{\variety}{V}
\newcommand\trop{{\mathcal T}} 
\DeclareMathOperator{\initial}{in}
\theoremstyle{plain}
    \newtheorem{theorem}{Theorem}
    \newtheorem{corollary}[theorem]{Corollary}
    \newtheorem{proposition}[theorem]{Proposition}
\theoremstyle{definition}
    \newtheorem{remark}[theorem]{Remark}
    \newtheorem{example}[theorem]{Example}
    \newtheorem{question}{Question}
\title{The degree of a tropical basis}
\author{Michael Joswig \and Benjamin Schr\"oter}
\address[Michael Joswig, Benjamin Schr\"oter]{
Institut f{\"u}r Mathematik,
 TU Berlin,
 Str.\ des 17. Juni 136, 10623 Berlin, Germany
}
\email{\{joswig,schroeter\}@math.tu-berlin.de}
\thanks{M.~Joswig is supported by Einstein Foundation Berlin and Deutsche Forschungsgemeinschaft (DFG)}
\subjclass[2010]{}
\keywords{}
\begin{document}

\begin{abstract}
  We give an explicit upper bound for the degree of a tropical basis of a homogeneous polynomial ideal.
  As an application $f$-vectors of tropical varieties are discussed.
  Various examples illustrate differences between Gr\"obner and tropical bases.
\end{abstract}
\maketitle

\section{Introduction}
\noindent
Computations with ideals in polynomial rings require an explicit representation in terms of a finite set of polynomials which generate that ideal.
The size, i.e., the amount of memory required to store this data, depends on three parameters: the number of generators, their degrees, and the sizes of their coefficients.
For purposes of computational complexity it is of major interest to study for which parameters generating sets exist.
An early step in this direction is Hermann's degree bound \cite{Hermann:1926} on solutions of linear equations over $\QQ[x_1,\dots,x_n]$.
In practice, however, not all generating sets are equally useful, and so it is important to seek complexity results for generating sets which have additional desirable properties.
A landmark result here is the worst case space complexity estimate for Gr\"obner bases by Mayr and Meyer \cite{MayrMeyer:1982}.

Tropical geometry associates with an algebraic variety a piecewise linear object in the following way.
Let $\KK$ be a field with a real-valued valuation, which we denote as $\val$.
We consider an ideal $I$ in the polynomial ring $\KK[x_1,\dots,x_n]$ and its vanishing locus $\variety(I)$, which is an affine variety.
The \emph{tropical variety} $\trop(I)$ is defined as the topological closure of the set
\begin{equation}
  \val\bigl(\variety(I)\bigr) \ = \ \SetOf{\strut\bigl(\val(z_1),\dots,\val(z_n)\bigr)}{z\in\variety(I)\cap (K\setminus\{0\})^n} \quad \subset \ \RR^n \enspace .
\end{equation}
In general, $\trop(I)$ is a polyhedral complex whose dimension agrees with the Krull dimension of $I$; see Bieri and Groves \cite{BieriGroves:1984}.
If, however, the ideal $I$ has a generating system of polynomials whose coefficients are mapped to zero by $\val$ that polyhedral complex is a fan.
This is the \emph{constant coefficient case}.
A major technical challenge in tropical geometry is the fact that, in general, intersections of tropical varieties do not need to be tropical varieties.
Therefore, the following concept is crucial for an approach via computational commutative algebra.
A finite generating subset $\cTB$ of $I$ is a \emph{tropical basis} if the tropical variety $\trop(I)$ coincides with the intersection of the finitely many tropical hypersurfaces $\trop(f)$ for $f\in \cTB$.
See \cite[Def.~2.6.3]{MaclaganSturmfels:2015} for an equivalent definition in terms of initial ideals.

Assuming constant coefficients, our main result states that each such ideal has a tropical basis whose degree does not exceed a certain bound which is given explictly.
While the bound which we are currently able to achieve is horrendous, to the best of our knowledge this is the first result of this kind.
Moreover, we present examples of tropical bases which exhibit several interesting features.
We close this paper with an application to $f$-vectors of tropical varieties and two open problems.

\section{Degree bounds}
\noindent
Throughout the following let $I$ be a homogeneous ideal in the polynomial ring $R:=K[x_1,\dots,x_n]$.
Bogart et al.\ were the first to describe an algorithm for computing a tropical basis \cite[Thm.~11]{BogartJensenSpeyerSturmfels:2007}. 
This algorithm is implemented in \texttt{Gfan}, a software package for computing Gr\"obner fans and tropical varieties \cite{gfan}.
Further, in \cite{Ren:2015}, Ren presents an enhanced algorithm which also covers non-constant coefficients.
Since our proof rests on the method of Bogart et al.\, we need to give a few more details.
Every weight vector $w\in\RR^n$ gives rise to a generalized term order on $R$.
The generalization lies in the fact that this order may only be partial, which is why the initial form $\initial_w(f)$ of a polynomial $f$ does not need to be a monomial.
Now the tropical variety of $I$ can be described as the set
\[
\trop(I) \ = \ \SetOf{w\in\RR^n}{\initial_w(I) \text{ does not contain any monomial}} \enspace ,
\]
where the \emph{initial ideal} $\initial_w(I)$ is generated from all initial forms of polynomials in $I$.
Declaring two weight vectors equivalent whenever their initial ideals agree yields a stratification of $\RR^n$ into relatively open polyhedral cones;
this is the \emph{Gr\"obner fan} of $I$.
Each maximal cone of the Gr\"obner fan corresponds to a proper term order or, equivalently, to a monomial initial ideal and a reduced Gr\"obner basis.
A Gr\"obner basis is \emph{universal} if it is a Gr\"obner basis for each term order.
By construction $\trop(I)$ is a subfan of the Gr\"obner fan.
A polynomial $f\in I$ is a \emph{witness} for a weight vector $w\in\RR^n$ if its initial form $\initial_w(f)$ is a monomial.
Such an $f$ certifies that the Gr\"obner cone containing $w$ is not contained in $\trop(I)$.
The algorithm in \cite{BogartJensenSpeyerSturmfels:2007} now checks each Gr\"obner cone and adds witnesses to a universal Gr\"obner basis to obtain a tropical basis.

To shorten our notation we fix the vector $w$ and abbreviate $J:=\initial_w(I)$.
The ideal $J$ contains the monomial $x^m=x_1^{m_1}\cdots x_n^{m_n}$ if and only if the \emph{saturation}
\[
J:x^m \ = \ \SetOf{f\in R}{x^m f \in J}
\]
contains a unit.
Hence the total degree of any witness does not exceed $\alpha n$, where $\alpha$ is the maximal saturation exponent of all initial ideals of $I$ with respect to $x_1\cdots x_n$.
We need to get a grip on that parameter $\alpha$.
The \emph{degree} of a finite set of polynomials is the maximal total degree which occurs.

\begin{proposition}\label{prop:saturation}
  Let $J$ be a homogeneous ideal.
  The saturation exponent $\alpha$ of $J$ with respect to $x_1\cdots x_n$ is bounded by 
  \[
  \alpha \ \leq \ \deg H \enspace ,
  \]
  where $H$ is a universal Gr\"obner basis for $J$.
\end{proposition}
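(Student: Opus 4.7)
My plan is to establish the stronger statement $g^d \cdot (J : g^{\infty}) \subseteq J$, where $d := \deg H$ and $g := x_1 \cdots x_n$; this immediately yields $\alpha \leq d$.

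First, I would fix any term order $\prec$. Since $H$ is universal, it is a Gr\"obner basis of $J$ with respect to $\prec$, so the initial ideal $M := \initial_\prec(J)$ is the monomial ideal generated by $\{\initial_\prec(h) : h \in H\}$. Each generator has total degree at most $d$, so every single-variable exponent appearing in any generator is at most $d$. Given a homogeneous $f \in J : g^{\infty}$ and an $N$ with $g^N f \in J$, taking leading terms yields $g^N \initial_\prec(f) \in M$, so some generator $x^b$ of $M$ divides this monomial. Since each exponent satisfies $b_i \leq d \leq d + \initial_\prec(f)_i$, the same generator already divides $g^d \initial_\prec(f)$; in particular $\initial_\prec(g^d f) \in M$.

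It then remains to deduce $g^d f \in J$ from this leading-term inclusion. The natural approach is to perform Gr\"obner reduction of $g^d f$ against $H$: the first step produces a polynomial with strictly smaller leading monomial, and iterating should terminate at $0$. The hard part will be precisely this termination. After a single reduction step the intermediate polynomial has the shape $g^d f - q h$ for a monomial $q$ and some $h \in H$, and in general it no longer factors as $g^d$ times an element of $J : g^{\infty}$, so the monomial bound of the previous paragraph does not transparently apply to the new leading term. Controlling the leading terms throughout the entire reduction is where the universality of $H$ should enter crucially: the bound $b_i \leq d$ holds simultaneously for every term order, and this can be exploited either by varying $\prec$ between reduction steps, or by analyzing a representation of $f \in J \cdot R[g^{-1}]$ in terms of $H$ and bounding the denominators $g^k$ that occur. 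Making either strategy rigorous will be the technical heart of the proof.
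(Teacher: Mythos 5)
Your opening reduction is fine: proving $g^{d}\,(J:g^{\infty})\subseteq J$ with $g=x_1\cdots x_n$ and $d=\deg H$ is exactly the proposition, and your observation that $\initial_\prec(g^{d}f)\in\initial_\prec(J)$ for every term order $\prec$ (because every minimal generator of $\initial_\prec(J)$ is a leading monomial of some element of $H$ and hence has each exponent at most $d$) is correct. But the step you yourself flag as ``the technical heart'' is a genuine gap, not a technicality. Membership of the leading monomial in the initial ideal does not imply membership in the ideal, and this inference stays false even if you know it for \emph{all} term orders simultaneously: in $\CC[x,y,z]$ take $J=\langle x+y+z\rangle$ and $p=x+y+2z$; then $\initial_\prec(p)\in\initial_\prec(J)$ for every term order $\prec$, yet $p\notin J$. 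So neither of your repair strategies is a routine completion: varying $\prec$ between reduction steps cannot succeed on leading-term information alone, and bounding the powers of $g$ in a representation of $f$ over the localization at $g$ is essentially a restatement of the proposition. As you note, after one reduction step the intermediate polynomial loses the factor $g^{d}$, and nothing in your argument controls its leading term thereafter.

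The missing mechanism --- and the route the paper actually takes --- is the special behaviour of the reverse lexicographic order: by \cite[Prop.~15.12]{Eisenbud:1995}, if $\{f_1,\dots,f_s\}$ is a revlex Gr\"obner basis of the homogeneous ideal $J$, then $\{f_i/\gcd(x_n,f_i)\}$ is a Gr\"obner basis of $J:x_n$. Iterating, the saturation exponent with respect to $x_n$ is at most $\deg_{x_n}H$; universality of $H$ supplies such a revlex basis with any variable playing the role of $x_n$, whence $\alpha=\max_i\deg_{x_i}H\le\deg H$. This is precisely the device that converts initial-term data into actual ideal membership with controlled exponents; without it (or a substitute of Bayer--Stillman type), your outline does not close.
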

\begin{proof}
  Since $H$ is universal it contains a Gr\"obner basis $\{f_1,\ldots,f_s\}$ for the reverse lexicographic order.
  By \cite[Prop.~15.12]{Eisenbud:1995} the set
  \[
  \left\{\frac{f_1}{\gcd(x_n,f_1)},\frac{f_2}{\gcd(x_n,f_2)},\ldots,\frac{f_s}{\gcd(x_n,f_s)}\right\}
  \]
  is a Gr\"obner basis for $J:x_n$.
  Thus the saturation exponent of $J$ with respect to $x_n$ is bounded by the degree $\deg_{x_n}(H)$ of $H$ in the variable $x_n$.
  Permuting the variables implies a similar statement for $x_i$.
  It follows that $\alpha = \max_{1\leq i \leq n} \deg_{x_i} H \ \leq \ \deg H$.
\end{proof}

Notice that the tropical variety of a homogeneous ideal $I$ coincides with the tropical variety of the saturated ideal
$
I:(x_1\cdots x_n)^\infty = \bigcup_{k\in\NN} I:(x_1\cdots x_n)^k
$.
For the next step we need to determine the degree of a universal Gr\"obner basis.
The key ingredient is a result of Mayr and Ritscher \cite{MayrRitscher:2010}.
Here and below $d$ is the \emph{degree} of $I$, i.e., the minimum of the degrees of all generating sets, and $r$ is the Krull dimension.

\begin{proposition}[Mayr and Ritscher]\label{prop:universal}
  Assume that $r \geq 1$.
  Each reduced Gr\"obner basis $\cGB$ of the ideal $I$ satisfies
  \begin{equation}\label{eq:GB_degree}
    \deg\cGB \ \leq \ 2 \left(\frac{d^{n-r}+d}{2}\right)^{2^{r-1}} \enspace .
  \end{equation}
\end{proposition}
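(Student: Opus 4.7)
The plan is to prove the bound by induction on the Krull dimension $r$, combining a Noether normalization in generic coordinates with a cone-decomposition argument in the style of Dub\'e. The intention is that the exponent $2^{r-1}$ arises from one squaring per dimension of the Noether variables during the induction, while the factor $d^{n-r}$ reflects the B\'ezout-type degree blow-up of elimination in the $n-r$ transcendental directions.

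First, after a generic linear change of coordinates, I would arrange that $K[x_{n-r+1},\dots,x_n] \hookrightarrow R/I$ is a Noether normalization; this is possible because $I$ has Krull dimension $r$. By standard B\'ezout estimates, each variable $x_i$ with $i\leq n-r$ satisfies an integral relation of degree at most $d^{n-r}$ over the Noether subring, and enlarging the generating set by these relations yields a set of polynomials of degree at most $d^{n-r}$ while keeping the original generators of degree at most $d$. This produces the base value $d^{n-r}+d$ that appears in \eqref{eq:GB_degree}.

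For the base case $r=1$, there is a single Noether variable, and the elimination ideal in that variable is principal of degree at most $d^{n-1}$. A direct argument for one-dimensional homogeneous ideals then shows that every reduced Gr\"obner basis has degree at most $d^{n-1}+d$, matching $2\cdot\bigl((d^{n-1}+d)/2\bigr)^{2^{0}}$.

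For the inductive step $r\to r+1$, I would cut $R/I$ by a generic hyperplane in the Noether directions and consider both the restriction to the hyperplane, of Krull dimension $r$, and the saturation with respect to the hyperplane's defining linear form. The inductive hypothesis bounds the Gr\"obner basis degree on the slice by the right-hand side of \eqref{eq:GB_degree} with $r$ in place of $r+1$; a Dub\'e-style Stanley cone decomposition then recombines the slice and its saturation into a Gr\"obner basis for $I$, squaring the doubly exponential exponent. The main obstacle is the recombination step: one must arrange that the squaring is \emph{exactly} by a factor of two in the exponent, and that the base $d^{n-r}+d$ is preserved throughout the induction rather than itself being squared. Pulling this off requires a careful bookkeeping of cone dimensions relative to the Noether normalization, and this is the technical heart of the argument in \cite{MayrRitscher:2010}.
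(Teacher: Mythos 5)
This proposition is not proved in the paper at all: it is quoted verbatim as a theorem of Mayr and Ritscher, and the paper's ``proof'' is the citation \cite{MayrRitscher:2010}. Measured against that, your text is a plausible high-level reconstruction of the ingredients of the cited argument (generic Noether position, a B\'ezout-type bound $d^{n-r}$ in the $n-r$ non-Noether directions, Dub\'e-style cone decompositions whose recursion depth is governed by the dimension $r$), but it is not a proof: the step you yourself identify as ``the technical heart'' --- the recombination of the hyperplane slice and the saturation via an exact cone decomposition, with the exponent squaring exactly once per unit of dimension while the base $(d^{n-r}+d)/2$ stays fixed --- is precisely the quantitative content of \eqref{eq:GB_degree}, and you defer it to \cite{MayrRitscher:2010} rather than establishing it. An outline whose central estimate is outsourced to the reference being proved cannot stand on its own.

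Beyond that structural gap, several intermediate claims are unjustified or wrong as stated. The assertion that ``each variable $x_i$ with $i\leq n-r$ satisfies an integral relation of degree at most $d^{n-r}$ over the Noether subring'' by ``standard B\'ezout estimates'' is not standard for an arbitrary homogeneous ideal: B\'ezout-type degree bounds apply to the (top-dimensional, unmixed) part of the variety, and embedded or lower-dimensional components --- exactly the components that make Gr\"obner degrees blow up doubly exponentially --- are not controlled this way; handling them is where the real work lies. In the base case $r=1$, if $K[x_n]\hookrightarrow R/I$ is a Noether normalization then $I\cap K[x_n]=0$, so ``the elimination ideal in that variable is principal of degree at most $d^{n-1}$'' does not make sense as written; and the ``direct argument'' that one-dimensional homogeneous ideals have reduced Gr\"obner bases of degree at most $d^{n-1}+d$ is itself a nontrivial theorem (essentially the $r=1$ case of the result), not something that can be waved through. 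In the context of this paper the honest options are either to cite \cite{MayrRitscher:2010} as the authors do, or to reproduce the cone-decomposition bookkeeping in full; the sketch as it stands does neither.
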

Lakshman and Lazard \cite{LakshmanLazard:1991} give an asymptotic bound of the degree on zero-dimensional ideals, that is, for $r=0$.
For Gr\"obner bases one could argue that the degree is more interesting than the number of polynomials.
This is due to the following simple observation.

\begin{remark}
  A reduced Gr\"obner basis of degree $e$ (of any ideal in $R$) can contain at most
  $\tbinom{e+n-1}{e}=\tbinom{e+n-1}{n-1}$ polynomials.  The reason is that no two leading monomials
  can divide one another.
\end{remark}

We are ready to bound the degree of a universal Gr\"obner basis.
In view of the previous remark this also entails a bound on the number of polynomials.
Since we will use Proposition~\ref{prop:universal}, throughout this section we will assume that $r\geq 1$.

\begin{corollary}\label{cor:universal}
  There is a universal Gr\"obner basis for $I$ whose degree is bounded by \eqref{eq:GB_degree}.
\end{corollary}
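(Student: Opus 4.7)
The plan is to construct a universal Gröbner basis explicitly as a finite union of reduced Gröbner bases, one per maximal cone of the Gröbner fan of $I$, and then observe that the degree bound from Proposition~\ref{prop:universal} is inherited.

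First I would recall that the Gröbner fan of $I$ is a finite polyhedral fan of $\RR^n$: distinct maximal cones correspond bijectively to the distinct monomial initial ideals of $I$, and hence to the distinct reduced Gröbner bases $\cGB_1,\dots,\cGB_N$ arising from proper term orders. The key finiteness input here is already invoked implicitly in the description of the Bogart--Jensen--Speyer--Sturmfels algorithm recalled above.

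Next I would set $\uGB := \cGB_1 \cup \cdots \cup \cGB_N$. This is a finite subset of $I$, and for any term order $\prec$ on $R$, the reduced Gröbner basis of $I$ with respect to $\prec$ is one of the $\cGB_i$ and in particular is contained in $\uGB$. Since a superset of a Gröbner basis is still a Gröbner basis for the same term order, $\uGB$ is a Gröbner basis for every $\prec$; by definition this makes $\uGB$ universal.

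Finally, the degree bound is immediate: by Proposition~\ref{prop:universal}, each $\cGB_i$ has degree bounded by the right-hand side of \eqref{eq:GB_degree}, and the degree of a finite union of sets of polynomials is simply the maximum of the individual degrees. Hence $\deg \uGB \leq 2\bigl((d^{n-r}+d)/2\bigr)^{2^{r-1}}$, as desired. There is no substantive obstacle in this step; the only thing to be slightly careful about is confirming that every term order is captured by some maximal cone of the Gröbner fan, which is the content of the standard bijection between maximal Gröbner cones and monomial initial ideals.
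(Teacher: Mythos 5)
Your proposal is correct and is essentially the paper's own argument: take the union of the finitely many reduced Gr\"obner bases (finiteness via the Gr\"obner fan) and note that the bound in Proposition~\ref{prop:universal} is uniform over term orders. The extra detail you give about the bijection between maximal Gr\"obner cones and monomial initial ideals only makes explicit what the paper leaves implicit.
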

\begin{proof}
  The union of the reduced Gr\"obner bases for all term orders is universal.
  The claim follows since the bound in Proposition~\ref{prop:universal} is uniform.
\end{proof}

For our main result we apply the bounds which we just obtained to the output of the algorithm in \cite{BogartJensenSpeyerSturmfels:2007}.

\begin{theorem}\label{thm:main}
  Suppose that the valuation $\val$ on the coefficients is trivial.
  There is a tropical basis $\cTB$ of the homogenous ideal $I$ with
  \begin{equation}\label{eq:TB_degree}
    \deg\cTB \ \leq \ \max\left\{\deg\uGB,\alpha n\right\} \ \leq \ n \deg\uGB \ \leq \ 2 n \left(\frac{d^{n-r}+d}{2}\right)^{2^{r-1}} \enspace ,
  \end{equation}
  where $\uGB$ is a universal Gr\"obner basis for $I$.
\end{theorem}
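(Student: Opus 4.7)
The plan is to chain together the ingredients already assembled in the excerpt and to supply the one missing link between them. I would invoke the Bogart--Jensen--Speyer--Sturmfels algorithm: starting from a universal Gr\"obner basis $\uGB$ for $I$, it produces a tropical basis $\cTB = \uGB \cup W$ by adjoining, for each Gr\"obner cone not contained in $\trop(I)$, a witness $f \in I$ whose initial form is a monomial. Choosing $\uGB$ to be the universal Gr\"obner basis supplied by Corollary~\ref{cor:universal} handles $\deg \uGB$, so the real task is controlling the witness degrees $\deg W$ and making sure they fit inside the claimed max.

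The bound on $\deg W$ is essentially already written down in the discussion preceding Proposition~\ref{prop:saturation}: for each weight $w$ with $\initial_w(I)$ containing a monomial, one may choose a monomial $x^m \in \initial_w(I)$ with each $m_i \leq \alpha_w$, where $\alpha_w$ is the saturation exponent of $\initial_w(I)$ with respect to $x_1 \cdots x_n$. Since $I$ is homogeneous, a witness $f \in I$ with $\initial_w(f) = x^m$ may be chosen homogeneous of degree $\deg x^m \leq n\alpha_w$. Taking the maximum over $w$ gives $\deg W \leq n\alpha$, which yields the first inequality $\deg \cTB \leq \max\{\deg \uGB, \alpha n\}$.

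The main step, and the one requiring care, is the middle inequality, which amounts to showing $\alpha \leq \deg \uGB$. By Proposition~\ref{prop:saturation} applied to $J := \initial_w(I)$, one has $\alpha_w \leq \deg H_w$ for any universal Gr\"obner basis $H_w$ of $J$, so it suffices to prove that $\initial_w(\uGB) := \{\initial_w(g) : g \in \uGB\}$ is itself a universal Gr\"obner basis of $\initial_w(I)$. I would establish this from the standard identity $\initial_u(\initial_w(h)) = \initial_{w+\epsilon u}(h)$, valid for sufficiently small $\epsilon > 0$ and every polynomial $h$, together with its ideal-theoretic counterpart: for each weight $u$, the identity gives $\initial_u(\initial_w(\uGB)) = \initial_{w+\epsilon u}(\uGB)$, and the right-hand side generates $\initial_{w+\epsilon u}(I) = \initial_u(\initial_w(I))$ because $\uGB$ is universal for $I$. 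Since $I$ is homogeneous, $\deg \initial_w(g) = \deg g$, so $\deg H_w \leq \deg \uGB$, and hence $\alpha \leq \deg \uGB$, yielding $\max\{\deg \uGB, \alpha n\} \leq n \deg \uGB$.

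Combining these inequalities with the bound \eqref{eq:GB_degree} from Corollary~\ref{cor:universal} produces the final estimate. The only non-routine ingredient is the claim that $\initial_w$ carries universal Gr\"obner bases of $I$ to universal Gr\"obner bases of $\initial_w(I)$; everything else is straightforward assembly of inequalities already in hand.
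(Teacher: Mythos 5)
Your proposal is correct and follows essentially the same route as the paper: the BJSS algorithm with witnesses of degree at most $\alpha n$, Proposition~\ref{prop:saturation} applied to each initial ideal $J=\initial_w(I)$, and the perturbation identity $\initial_u(\initial_w(\cdot))=\initial_{w+\epsilon u}(\cdot)$ to transport the universal Gr\"obner basis $\uGB$ to a universal Gr\"obner basis of $J$ of no larger degree, before invoking Corollary~\ref{cor:universal}. The only difference is that you spell out in detail the perturbation argument and the homogeneous choice of witnesses, which the paper states in one line.
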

\begin{proof}
  The number $\alpha n$ bounds the degree of a witness, and so the first inequality follows from the correctness of the algorithm \cite[Thm.~11]{BogartJensenSpeyerSturmfels:2007}.
  From $\uGB$ we can obtain a universal Gr\"obner basis $H$ for any initial ideal $J$, and this satisfies $\deg H \leq \deg\uGB$.
  An initial ideal of $J$ with term ordering $w$ coincides with the initial ideal of $I$ with respect to a perturbation of the term order that yields $J$ in direction $w$.
  From Proposition~\ref{prop:saturation} we thus get the second inequality.
  Finally, the third inequality follows from \eqref{eq:GB_degree} and Corollary~\ref{cor:universal}.
\end{proof}

\section{Examples}
\noindent
Throughout this section, we will be looking at the case $\KK=\CC$, and $\val$ sends each non-zero complex number to zero.
In particular, as above, we are considering constant coefficients.

It is known that, in general, a universal Gr\"obner basis does not need to be a tropical basis; see \cite[Ex.~10]{BogartJensenSpeyerSturmfels:2007} or \cite[Ex.~2.6.7]{MaclaganSturmfels:2015}.
That is, it cannot be avoided to compute witness polynomials.
In fact, the following example, which is a simple modification of \cite[Ex.~10]{BogartJensenSpeyerSturmfels:2007}, shows that adding witnesses may even increase the degree.

\begin{example}\label{ex:universal}
  Let $I\subset\CC[x,y,z]$ be the ideal generated by the six degree $3$ polynomials
  \begin{center}
    \begin{tabular}{ccc}
      $x^2 y + x y^2\,$, & $x^2 z + x z^2\,$, & $y^2 z + y z^2$\,,\\
      $x^3 + x^2y + x^2 z\,$, & $xy^2 + y^3 + y^2z\,$, & $x  z^2 + yz^2 + z^3$\\
    \end{tabular}
  \end{center}
  These six generators together with the ten polynomials of degree $3$ below form a universal Gr\"obner basis for $I$.
  \begin{center}
    \begin{tabular}{ccc}
    $x^3 - x y^2 - x z^2$\,, & $x^2  y - y^3 + y z^2$\,, & $x^2  z + y^2  z - z^3$\,, \\
    $x^3 - x y^2 + x^2 z$\,, & $x  y^2 + y^3 - y z^2$\,, & $x  z^2 - y^2  z + z^3$\,, \\
    $x^3 + x^2 y - x z^2$\,, & $x^2  y - y^3 - y^2 z$\,, & $x^2  z - y  z^2 - z^3$\,, \\ 
    & $x^3+y^3+z^3$ &\\
  \end{tabular}
  \end{center}
  The monomial $x^2yz$ of degree $4$ is contained in $I$.
  This is a witness to the fact that the tropical variety $\trop(I)$ is empty.
  Since, however, there is no monomial of degree $3$ contained in $I$, any tropical basis must have degree at least $4$.
  One such tropical basis, $\cTB$, is given by the six generators and the monomial $x^2yz$.
  This also shows that a tropical basis does not need to contain a universal Gr\"obner basis.
\end{example}

A tropical basis does not even need to be any Gr\"obner basis, as the next example shows.
\begin{example}\label{ex:not-groebner}
  Consider the three polynomials
  \[ x^5 \, , \quad x^4+x^2y^2+y^4 \, , \quad y^5 \]
  in $\CC[x,y]$.
  They form a tropical basis for the ideal they generate. However each Gr\"obner basis has to include at least one of the S-polynomials $x^3y^2+xy^4$ or $x^4y+x^2y^3$.
\end{example}

For conciseness the Examples~\ref{ex:universal} and~\ref{ex:not-groebner} address tropical varieties which are empty.
One can modify the above to obtain ideals and systems of generators with similar properties for tropical varieties of arbitrarily high dimension.
We leave the details to the reader.

It is obvious that the final upper bound in \eqref{eq:TB_degree} is an extremely coarse estimate.
However, better bounds on the degree of the universal Gr\"obner basis can clearly be exploited.
The following example may serve as an illustration.

\begin{example}\label{ex:Delta24}
  Let $I=\langle xy-zw+uv\rangle\subset\CC[x,y,z,u,v,w]$. In this case we have $d=2$, $n=6$ and $r=5$.
  Since $I$ is a principal ideal the single generator forms a Gr\"obner basis, which is even universal and also a tropical basis.
  The degree of that universal Gr\"obner basis is $d=2$, which needs to be compared with the upper bound of $2^{17}$ from \eqref{eq:GB_degree}.
  For the saturation exponent we have $\alpha=1\leq 2$, and the degree of the tropical basis equals $d=2$.
  This is rather close to the bound $\alpha n=6$, whereas the final upper bound in \eqref{eq:TB_degree} is as much as $3\cdot 2^{18}$.
\end{example}

Our final example generalizes the previous.
In fact, Example~\ref{ex:Delta24} re-appears below for $D=2$ and $N=4$.

\begin{example}
  The Pl\"ucker ideal $I_{D,N}$ captures the algebraic relations among the $D{\times}D$-minors of a generic $D{\times}N$-matrix with coefficients in the field $\KK$.
  This is a homogeneous prime ideal in the polynomial ring over $\KK$ with $n=\tbinom{N}{D}$ variables.
  The variety  $\variety(I_{D,N})$ is the \emph{Grassmannian} of $D$-planes in $\KK^N$.
  Its tropicalization $\trop(I_{D,N})$ is the \emph{tropical Grassmannian} of Speyer and Sturmfels \cite{SpeyerSturmfels:2004}; see also \cite[\S4.3]{MaclaganSturmfels:2015}.

  The Pl\"ucker ideal is generated by quadratic relations; see \cite[Thm~3.1.7]{Sturmfels:2008}.  
  Its dimension equals $r=(N-D)D+1$; see \cite[Cor~3.1]{SpeyerSturmfels:2004}.
  From this data we derive that there is a tropical basis $\cTB_{D,N}$ of degree
  \[
  \deg\cTB_{D,N} \ \leq \ 2 \cdot \tbinom{N}{D} \cdot \left(2^{\tbinom{N}{D}-ND+D^2-2}+1\right)^{2^{ND-D^2}} \enspace .
  \]
  To the best of our knowledge explicit tropical bases for $I_{D,N}$ are known only for $D=2$ and $(D,N)\in\{(3,6),\,(3,7)\}$; see \cite{SpeyerSturmfels:2004} and \cite{HerrmannJensenJoswigSturmfels:2009}.
  Note that for $D=2$ the degree of a universal Gr\"obner basis grows with $n$ while the quadratic $3$-term Pl\"ucker relations form a tropical basis.
\end{example}

\section{The $f$-vector of a tropical variety}
\noindent
The \emph{$f$-vector} of a polyhedral complex, which counts the number of cells by dimension, is a fundamental combinatorial complexity measure.
In this section we will give an explicit bound on the $f$-vector of a tropical variety $\trop(I)$, with arbitrary valuation on the field $\KK$, in terms of the number $s$ of polynomials in a tropical basis $\cTB$ and the degree $d$ of a tropical basis, $\cTB$.
Notice that in the previous sections ``$d$'' was the degree of $I$.

First we discuss the case of a tropical hypersurface, that is, $s=1$, as in Example~\ref{ex:Delta24}.
Let $g\in R$ be an arbitrary homogeneous polynomial of degree $d$.
In contrast to the previous sections, here we are admitting non-constant coefficients.
A tropical hypersurface $\trop(g)$ is dual to the regular subdivision of the Newton polytope $N(g)$ of $g$, which is gotten from lifting the lattice points in $N(g)$, which correspond to the monomials in $g$, to the valuation of their coefficients \cite[Prop.~3.1.6]{MaclaganSturmfels:2015}.
See the monograph \cite{LoeraRambauSantos:2010} for details on polytopal subdivisions of finite point sets.
The polynomial $g$ has at most $\tbinom{d+n-1}{n-1}$ monomials, which correspond to the lattice points in the $d$th dilation of the $(n-1)$-dimensional simplex $d\cdot\Delta_{n-1}$.
The \emph{standard simplex} $\Delta_{n-1}$ is the $(n{-}1)$-dimensional convex hull of the $n$ standard basis vectors $e_1,\ldots,e_n$.
The maximal $f$-vector of a polytopal subdivision of $d\cdot \Delta_{n-1}$ by lattice points is (simultaneously for all dimensions) attained for a unimodular triangulation \cite[Thm.~2]{BetkeMcMullen:1985}.
If $\Delta$ is such a unimodular triangulation, then its vertices use all lattice points in $d\cdot\Delta_{n-1}$.
The converse does not hold if $n\geq 4$.
The $f$-vector of $\Delta$ equals
\begin{equation}\label{eq:f-simplex}
  f^\Delta_j \ = \ \sum_{i=0}^j (-1)^{i+j} \tbinom{j}{i}\tbinom{di+d+n-1}{n-1} \enspace ;
\end{equation}
see \cite[Thm.~9.3.25]{LoeraRambauSantos:2010}.
By duality the bound in \eqref{eq:f-simplex} translates into a bound on the $f$-vector for the tropical hypersurface $\trop(g)$:
\begin{equation}\label{eq:f-hypersurface}
  f^{\trop(g)}_j \ \leq \  f^\Delta_{n-j-1} \ \leq \ \sum_{i=1}^{n-j} (-1)^{n+i-j} \tbinom{n-j-1}{i-1}\tbinom{di+n-1}{n-1} \enspace .
\end{equation}

From the above computation we can derive the following general result.
\begin{proposition}
  Let $I$ be a homogeneous ideal in $R$.
  Then the $f$-vector of the tropical variety $\trop(I)$ with a tropical basis $\cTB$, consisting of $s$ polynomials of degree at most $d$, is bounded by
  \[
  f_j \ \leq \  \sum_{i=1}^{n-j} (-1)^{n+i-j} \tbinom{n-j-1}{i-1}\tbinom{sdi+n-1}{n-1} \enspace .
  \]
\end{proposition}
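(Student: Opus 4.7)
The plan is to reduce the statement to the tropical hypersurface case already handled by \eqref{eq:f-hypersurface}. For this I form the single polynomial
\[
g \ := \ \prod_{f \in \cTB} f \ \in \ I \enspace ,
\]
whose total degree is at most $sd$. As a set, $\trop(g) = \bigcup_{f\in\cTB} \trop(f)$; and since $\cTB$ is by assumption a tropical basis, one also has $\trop(I) = \bigcap_{f\in\cTB}\trop(f)$. The tropical hypersurface $\trop(g)$ carries its natural polyhedral structure dual to the regular subdivision of its Newton polytope.

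The key step is then to observe that each individual $\trop(f)$ with $f\in\cTB$ is a subcomplex of $\trop(g)$ with respect to these natural polyhedral structures. This rests on the classical fact that the Newton polytope of a product is the Minkowski sum, $N(g)=\sum_{f\in\cTB} N(f)$, and that the regular subdivision of the Minkowski sum refines the regular subdivisions of the summands. Since a finite intersection of subcomplexes of a fixed polyhedral complex is again a subcomplex, it follows that $\trop(I) = \bigcap_{f\in\cTB}\trop(f)$ is a subcomplex of $\trop(g)$. In particular, the inequality
\[
f_j\bigl(\trop(I)\bigr) \ \leq \ f_j\bigl(\trop(g)\bigr)
\]
holds cell by cell for every dimension $j$.

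The conclusion is now immediate: applying \eqref{eq:f-hypersurface} to the hypersurface $\trop(g)$, with $d$ replaced by the bound $sd$ on $\deg g$, produces exactly the claimed estimate. The only genuine point of care is to fix which polyhedral structure on $\trop(I)$ is counted by the $f$-vector. The natural choice compatible with the hypothesis of a tropical basis is the common refinement of the hypersurface structures of the $\trop(f)$ for $f\in\cTB$; this is precisely the structure for which the subcomplex embedding into $\trop(g)$ is available, so that the hypersurface bound transfers to $\trop(I)$ without further work.
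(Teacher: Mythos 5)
Your argument is essentially the paper's own proof: form the product $g$ of the $s$ basis elements (of degree at most $sd$), note that the subdivision dual to $\trop(g)$ is the common refinement of those dual to the $\trop(f)$, so that $\trop(I)=\bigcap_{f\in\cTB}\trop(f)$ is a subcomplex of $\trop(g)$, and then invoke the hypersurface bound \eqref{eq:f-hypersurface} with $d$ replaced by $sd$. The proposal is correct, and your closing remark about fixing the polyhedral structure on $\trop(I)$ as the common refinement matches the convention implicit in the paper.
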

\begin{proof}
Let $g$ denote the product $h_1\cdots h_s$ of all polynomials in the tropical basis $\cTB$.
The tropical hypersurface of $g$ is the support of the $(n{-}1)$-skeleton of the polyhedral complex dual to a regular subdivision of the Newton polytope $N(g)$; see \cite[Prop.~3.1.6]{MaclaganSturmfels:2015}.
This polytope is the Minkowski sum of all Newton polytopes $N(h)$ for $h\in\cTB$. 
Moreover, the polyhedral subdivision of $N(g)$ dual to $\trop(g)$ is the common refinement of the subdivisions of the Newton polytopes for the polynomials in $\cTB$.
The tropical variety $\trop(I)$ is a subcomplex of this refinement since, by the definition of $T$, we have
\[
\trop(I) \ = \ \bigcap_{f\in T} \trop(f) \enspace .
\]
The polynomial $g$ is of degree at most $sd$.
From the inequality~\eqref{eq:f-hypersurface} we get the claim.
\end{proof}

Let us now discuss the special case of a tropical hypersurface $\trop(g)$ with constant coefficients.
That is, we assume that the valuation map applied to each coefficient of the homogeneous polynomial $g$ yields zero.
In this case the lifting is trivial and thus $\trop(g)$ is dual to a lattice polytope contained in the simplex $d\cdot\Delta_{n-1}$; see \cite[Prop.~3.1.10]{MaclaganSturmfels:2015}.
We introduce the parameter
\[
\lambda_j(d,n) \ = \ \max \SetOf{f^P_j}{\text{$P$ is a lattice polytope in $d\cdot\Delta_{n-1}$}} \enspace,
\]
which measures how combinatorially complex tropical hypersurfaces (with constant coefficients) can be.
We arrive at the following conclusion.
\begin{corollary}
  Let $I$ be a homogeneous ideal in $R$ which is generated by polynomials with constant coefficients.
  Then the $f$-vector of the tropical variety $\trop(I)$ a tropical basis $\cTB$, consisting of $s$ polynomials of degree at most $d$, is bounded by
  \begin{equation}\label{eq:f-tropvariety-const}
    f_j \ \leq \ \lambda_{n-j-1}(sd,n) \enspace .
  \end{equation}
\end{corollary}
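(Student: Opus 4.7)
The plan is to mirror the argument of the preceding proposition, replacing the generic hypersurface bound \eqref{eq:f-hypersurface} with the sharper constant-coefficient quantity $\lambda_{n-j-1}$.

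First I set $g := h_1 h_2 \cdots h_s$, the product of the polynomials in the tropical basis $\cTB$. Then $\deg g \le sd$ and, because each $h\in\cTB$ has constant coefficients, so does $g$. Exactly as in the preceding proof, $\trop(g)=\bigcup_{h\in\cTB}\trop(h)$ carries a canonical polyhedral structure (the common refinement of the structures on the $\trop(h)$), and $\trop(I)=\bigcap_{h\in\cTB}\trop(h)$ is a subcomplex of $\trop(g)$ for this structure. In particular $f_j^{\trop(I)} \leq f_j^{\trop(g)}$ for every $j$.

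Next I exploit that $g$ has constant coefficients. By \cite[Prop.~3.1.10]{MaclaganSturmfels:2015}, no nontrivial regular subdivision appears: $\trop(g)$ is dual to the single lattice polytope $N(g)$, which is contained in $sd\cdot\Delta_{n-1}$ because $\deg g\le sd$. Under this duality a $k$-face of $N(g)$ corresponds to an $(n{-}k{-}1)$-cell of $\trop(g)$, whence $f_j^{\trop(g)} = f_{n-j-1}^{N(g)}$.

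Finally, the very definition of $\lambda_{n-j-1}(sd,n)$ as the maximum of $f_{n-j-1}^P$ over all lattice polytopes $P$ inside $sd\cdot\Delta_{n-1}$ yields $f_{n-j-1}^{N(g)} \leq \lambda_{n-j-1}(sd,n)$, and concatenating the three inequalities proves~\eqref{eq:f-tropvariety-const}. The one delicate point is keeping the dimension bookkeeping straight in the face correspondence between $\trop(g)$ and $N(g)$; this is a standard part of the tropical/Newton-polytope dictionary, and everything else is a direct adaptation of the previous proposition, now shortened by the fact that in the constant coefficient case no subdivision refinement of $N(g)$ has to be handled.
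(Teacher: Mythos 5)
Your argument is exactly the paper's: form the product $g=h_1\cdots h_s$ of degree at most $sd$, note that $\trop(I)$ is a subcomplex of the complex dual to $N(g)$, and use that with constant coefficients the lifting is trivial, so by \cite[Prop.~3.1.10]{MaclaganSturmfels:2015} one is bounding cells by faces of a single lattice polytope inside $sd\cdot\Delta_{n-1}$, which is precisely the definition of $\lambda_{n-j-1}(sd,n)$; your dimension bookkeeping also matches the convention already used in \eqref{eq:f-hypersurface}. So the proposal is correct and follows essentially the same route as the paper (only note that the asserted equality $f_j^{\trop(g)}=f_{n-j-1}^{N(g)}$ is really just the inequality needed, since not every face of $N(g)$ is dual to a cell of $\trop(g)$).
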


Notice that the $(n-1)$-simplex has $\lambda_0(1,n)=n$ vertices and an interval has $\lambda_0(d,2)=2$ vertices. The number of vertices $\lambda_0(d,n)$ does not exceed the sum of the number of vertices in $(d-1)\cdot\Delta_{n-1}$ and $d\cdot\Delta_{n-2}$.
Hence, e.g., the number of $(n{-}1)$-cells of $\trop(I)$ in \eqref{eq:f-tropvariety-const} is bounded by
\[
  f_{n-1}\ \leq \ \lambda_0(sd,n) \ \leq \ \sum_{i=0}^{sd-2}2\binom{i+n-3}{i}+\sum_{i=0}^{n-3}(n-i)\binom{i+sd-2}{i} \enspace .
\]

\begin{table}[t]
\centering
\caption{The vectors $\lambda(d,n)$ for small values of $d$ and $n$.
  A star indicates only a lower bound, which is due to the fact that we could not complete our ad hoc computation with the given resources.}
\label{tab:lambda}
\begin{tabular*}{1.00\linewidth}{@{\extracolsep{\fill}}cccccc@{}}
    \toprule
   $n \backslash d$ & $1$ & $2$ & $3$ & $4$ & $5$\\
     \midrule
     $2$ & $(2)  $       & $(2)$            & $(2)$             & $(2)$             & $(2)$\\ 
     $3$ & $(3,3)$       & $(4,4)$          & $(6,6)$           & $(6,6)$           & $(8,8)$\\
     $4$ & $(4,6,4)$     & $(7,12,8)$       & $(12,18,10)$      & $(15,24,16)^*$    & $(18,33,18)^*$\\
     $5$ & $(5,10,10,5)$ & $(11,30,30,10)$  & $(20,48,47,20)^*$ & $(25,70,73,26)^*$ & $(31,82,77,25)^*$\\
     \bottomrule
  \end{tabular*}
\end{table}

We calculated the numbers $\lambda_j(d,n)$ for small values of $d$ and $n$ with \texttt{polymake} \cite{DMV:polymake}.
The result is summarized in Table~\ref{tab:lambda}.
Note that, e.g., for $d=2$ and $n=4$ there is no polytope that maximizes $f_j$ simultaneously for all $j$.
We expect that it is difficult to explicitly determine the values for $\lambda_j(d,n)$.
The somewhat related question of determining the (maximal) $f$-vectors of $0/1$-polytopes is a challenging open problem; see~\cite{Ziegler:2000}.

\section{Open Problems}
\noindent
Our results immediately raise a number of questions.
We mention two, which we find most interesting.

\begin{question}
  How can the upper bound \eqref{eq:TB_degree} in Theorem~\ref{thm:main} for the degree of a tropical basis in the constant coefficient case be generalized to cover arbitrary coefficients?
\end{question}
This amounts to replacing the analysis of \cite[Thm.~11]{BogartJensenSpeyerSturmfels:2007} in our proof of Theorem~\ref{thm:main} by scrutinizing the more involved general methods in \cite[Thm.~2.6.6]{MaclaganSturmfels:2015} or \cite{Ren:2015}.
Another approach would be via the alternative algorithm of Hept and Theobald \cite{HeptTheobald:2009}, which is based on projections.

Our current techniques (for constant coefficients) employ the Gr\"obner fan of an ideal, i.e., a universal Gr\"obner basis.
Yet, as Example~\ref{ex:not-groebner} shows tropical bases and Gr\"obner bases are not related in a straightforward way.
\begin{question}
  Is it possible to directly obtain a tropical basis from the generators of an ideal, i.e., without the need to compute any Gr\"obner basis?
\end{question}
Notice that the algorithm of Hept and Theobald \cite{HeptTheobald:2009} uses elimination (Gr\"obner bases).
However, one may ask if techniques from polyhedral geometry can further be exploited to obtain yet another method for computing tropical bases and tropical varieties.

\bibliographystyle{alpha}
\bibliography{References}
\end{document}